\DeclareMathOperator*{\find}{find}
\newcommand{\R}{\mathbb{R}} 
\newcommand{\K}{\mathbb{K}} 
\newcommand{\N}{\mathbb{N}}
\newcommand{\A}{\mathcal{A}}
\newcommand{\0}{\mathbf{0}}
\newcommand{\1}{\mathbf{1}}
\newcommand{\psd}{\mathbb{S}}
\DeclarePairedDelimiter{\abs}{\lvert}{\rvert}
\DeclarePairedDelimiter{\norm}{\lVert}{\rVert}
\DeclarePairedDelimiterX{\inp}[2]{\langle}{\rangle}{#1, #2}
\newtheorem{thm}{Theorem}[section]
\newtheorem{prop}[thm]{Proposition}
\newtheorem{rem}{Remark} 
\newtheorem{prob}{Problem} 
\newcommand{\du}{\Delta u}
\newcommand{\dy}{\Delta y}
\newcommand{\dc}{\mathcal{D}}
\newcommand{\pc}{\mathcal{P}}
\newcommand{\bpc}{\bar{\mathcal{P}}}
\newcommand{\alt}{\textrm{alt}}
\begin{document}
\begin{frontmatter}

\title{Superstabilizing Control of Discrete-Time ARX Models under Error in Variables \thanksref{footnoteinfo}} 

\thanks[footnoteinfo]{J. Miller, T. Dai, and M. Sznaier were partially supported by NSF grants  CNS--1646121, ECCS--1808381 and CNS--2038493, AFOSR grant FA9550-19-1-0005, and ONR grant N00014-21-1-2431.  
J. Miller was in part supported by the Chateaubriand Fellowship of the Office for Science \& Technology of the Embassy of France in the United States.}

\author[First]{Jared Miller} 
\author[First]{Tianyu Dai} 
\author[First]{Mario Sznaier} 

\address[First]{ECE Department, Northeastern University, Boston, MA 02115. (Emails: \{miller.jare, dai.ti\}@northeastern.edu, msznaier@coe.neu.edu).}

\begin{abstract}
\label{sec:abstract}


This paper applies a polynomial optimization-based framework towards the superstabilizing control of an Autoregressive with Exogenous Input (ARX) model given noisy data observations. The recorded input and output values are corrupted with L-infinity-bounded noise where the bounds are known. This is an instance of Error in Variables (EIV) in which true internal state of the ARX system remains unknown. The consistency set of ARX models compatible with noisy data has a bilinearity between unknown plant parameters and unknown noise terms. The requirement for a  dynamic compensator to superstabilize all consistent plants is expressed using polynomial nonnegativity constraints, and solved using sum-of-squares (SOS) methods in a converging hierarchy of semidefinite programs in increasing size. The computational complexity of this method may be reduced by applying a Theorem of Alternatives to eliminate the noise terms. The effectiveness of this method is demonstrated on control of example ARX models. 

\end{abstract}

\begin{keyword}
Data-based control, Linear systems, Sum-of-squares, Robust controller synthesis, 	Convex optimization, Uncertain systems
\end{keyword}

\end{frontmatter}

\section{Introduction}
\label{sec:introduction}
\ac{DDC} is a class of algorithms that perform control of plants consistent with measured data without requiring a system identification step first \citep{HOU20133}. Recent design methods for state space models are based on Willem's Fundamental Lemma \citep{depersis2020formulas, berberich2020robust}, Matrix S-Lemma \citep{waarde2020noisy}, and/or a theorem of alternatives \citep{dai2018moments}.

Most \ac{DDC} applications involve structured process noise in dynamics. The \ac{EIV} case is considerably less studied for control. Prior work about system identification and state estimation of \ac{EIV}-affected systems includes \citep{norton1987identification, cerone1993feasible, cerone2011set, soderstrom2018errors}. Polynomial optimization for system identification of \ac{EIV} \ac{ARX} models has previously been considered in Chapter 6 of \citep{cheng2016robust} using rank-minimization, but the \ac{EIV} control task was not previously addressed. This paper is a continuation of research started by the authors in \citep{miller2022eiv_short, miller2022eiv} for \ac{EIV} control with full-state feedback.


Superstability is a computationally tractable restriction of stability used in control and static output feedback \citep{polyak2001optimal, polyak2002superstable}. Superstability allows for the design of controllers with guaranteed performance bounds on the growth of the $L_\infty$-norm of the state/output
\citep{sznaier1995siso}, which was extended to the matrix transfer function case in \citep{chen2005design}.
Data-driven superstabilization of \ac{ARX} models under process noise was performed in \citep{CHENG2015}.

To the best of our knowledge, this paper is the first work that addresses output-feedback \ac{DDC} with noisy input-output measurements.

The contributions of this work are,
\begin{itemize}
    \item Formulation and solution of \ac{EIV} \ac{ARX} superstabilization as \iac{POP} using \ac{SOS} methods.
    \item Application of a Theorem of Alternatives to form a \ac{POP} where noise terms $(\du, \dy)$ are eliminated.
    \item Accounting of the computational complexity to solve the \acp{SDP} with and without utilizing the Theorem of Alternatives.
\end{itemize}

This paper has the following structure: 
Section \ref{sec:preliminaries} introduces preliminaries such as notation, \ac{ARX} models, Superstability applied to \ac{ARX} models, and \ac{SOS} methods. Section \ref{sec:full} presents \iac{POP} for superstabilizing control involving the variables $(a, b, \du, \dy)$. Section \ref{sec:altern} utilizes a Theorem of Alternatives to create an equivalent \ac{POP} in terms of $(a,b)$. Section \ref{sec:complexity} tabulates the computational complexity of these \acp{SDP}. Section \ref{sec:examples} applies the derived methods towards example superstabilization problems. 
Section \ref{sec:extensions} introduces extensions such as process noise and Switched \ac{ARX} models. 
Section \ref{sec:conclusion} concludes the paper.

\section{Preliminaries}
\label{sec:preliminaries}

\begin{acronym}[WSOS]

\acro{ARX}{Autoregressive with Exogenous Inputs}

\acro{BSA}{Basic Semialgebraic}
\acro{DDC}{Data Driven Control}

\acro{EIV}{Error in Variables}
\acroindefinite{EIV}{an}{a}






\acro{LP}{Linear Program}
\acroindefinite{LP}{an}{a}


\acro{POP}{Polynomial Optimization Problem}

\acro{PSD}{Positive Semidefinite}


\acro{SDP}{Semidefinite Program}
\acroindefinite{SDP}{an}{a}


\acro{SOS}{Sum of Squares}
\acroindefinite{SOS}{an}{a}

\acro{WSOS}{Weighted Sum of Squares}

\end{acronym}

\subsection{Notation}
The set of real numbers is $\R$ and the $n$-dimensional real Euclidean vector space is $\R^n$. The set of integers between $r$ and $s$ is $r..s$. The $n$-dimensional nonnegative real orthant is $\R^n_+$. The set of $n \times m$ real-valued matrices is $\R^{n \times m}$. The transpose of a matrix $M$ is $M^T$. The identity matrix is $I$, the all-ones matrix is $\1$, and the all-zeros matrix is $\0$. The $L_1$-norm of a vector $x$ is $\norm{x}_1 = \sum_{i=1}^n \abs{x_i}$. The $L_\infty$ norm of a vector $x$ is $\norm{x}_\infty = \max_{i\in1..n} \abs{x_i}$ The vertical concatenation of vectors $x$ and $y$ is $[x;y]$. 
The lag operator $\lambda$ applied to a sequence $\{x_t\}$ is $\lambda x_t = x_{t-1}$. 
The cross-correlation $x\star y$ between  $\{x_t\}_{t=l}^k$ and $\{y_t\}_{t=m}^n$ is
\begin{equation}
\label{eq:cross_correlation}
    (x\star y)_j = \textstyle \sum_{i=m}^{n} x_{i+j-n+m-1}y_i, \quad j = l..k+n-m.
\end{equation}

The cardinality of a finite set $\A$ is $\abs{\A}.$
The set of natural numbers is $\N$, and the set of $n$-dimensional multi-indices is $\N^n$. 
Every polynomial $p(x)$ may be defined with respect to some set $\A \subset \N^n$ as $p(x) = \sum_{\alpha \in \A} c_\alpha x^\alpha$ with all coefficients $c_\alpha \neq 0$. The degree of a polynomial $p(x)$ is $\deg p$. The ring of real-valued polynomials with indeterminates $x$ is $\R[x]$, and its subset of polynomials with degree $d$ or less is $\R[x]_{\leq d}$. The notation $(\R[x])^k$ will denote a $k$-vector of polynomials in $x$.
The coefficient vector of a polynomial $p(x)$ is $c = \textbf{coeff}[p(x)]$.


\subsection{ARX Models}

An \ac{ARX} model with input-output sequence $(u, y)$ and parameters $(a, b)$ such that $n_a > n_b$ obeys dynamics for times $t \in 1..T$:
\begin{align}
    y_{t} &= -\textstyle \sum_{i=1}^{n_a} a_i y_{t-i} + \sum_{i=1}^{n_b} b_i u_{t-i}.  \label{eq:arx_ground_truth}
\end{align}

The \ac{ARX} model in \eqref{eq:arx_ground_truth} may be represented as a rational transfer function in the lag operator $\lambda$ with
\begin{align}
    G(\lambda) = \frac{\sum_{i=1}^{n_b} b_i \lambda^{i}}{1 + \sum_{i=1}^{n_a} a_i \lambda^{i} } = \frac{B(\lambda)}{1 + A(\lambda)}.\label{eq:transfer_arx}
\end{align}

The numerator $B$ and denominator $A$ of \eqref{eq:transfer_arx} are each polynomials in $\lambda$. The function \eqref{eq:transfer_arx} satisfies $B(0) = 0$ and is assumed to be strictly proper ($G(0) = 0$).

Let $C(\lambda)$ be a dynamic compensator with parameters $(\tilde{a}, \tilde{b})$ of length $(\tilde{n}_a, \tilde{n}_b)$ following the structure of \eqref{eq:transfer_arx},
yielding polynomials $(\tilde{A}(\lambda), \tilde{B}(\lambda))$ with $C(\lambda) = \tilde{B}(\lambda)/(1+\tilde{A}(\lambda))$.
Application of $C(\lambda)$ in feedback with $G(\lambda)$ yields the following closed-loop transfer function $G_{cl}(\lambda)$:
\begin{align}
    \frac{G(\lambda)}{1 + G(\lambda) C(\lambda)} = \frac{B(\lambda) (1+\tilde{A}(\lambda))}{(1+A(\lambda))(1+ \tilde{A}(\lambda)) + B(\lambda) \tilde{B}(\lambda)}. \label{eq:closed_loop}
\end{align}

The closed-loop system parameter $a^{cl}$ related to \eqref{eq:closed_loop} is
\begin{align}
\label{eq:closed_loop_coeff}
a^{cl} &= \textbf{coeff}[(1+A(\lambda)) (1+\tilde{A}(\lambda)) + B(\lambda) \tilde{B}(\lambda) - 1].
\end{align}

A single \ac{ARX} system in \eqref{eq:arx_ground_truth} is stable if the roots of $1+ A(\lambda)$ are contained within the exterior of the unit disc $\{\lambda \mid \sqrt{\textrm{Re}(\lambda)^2 + \textrm{Im}(\lambda)^2} > 1\}$. This stability may be verified through numerical computation of roots  or with the Jury stability criterion \citep{ogata1995discrete}. These methods are computationally expensive to employ when designing stabilizing compensators $C$ (as in \eqref{eq:closed_loop})such that \eqref{eq:closed_loop_coeff} is stable, given that the methods all involve polynomial constraints on the entries of $(\tilde{a}, \tilde{b})$.

\subsection{Superstability}
Superstability \citep{blanchini1995persistent, polyak2001optimal, polyak2002superstable} is a conservative notion of stability that possesses simpler computational properties. An \ac{ARX} system is superstable if $\norm{a}_1 < 1$.
If the system satisfies $\norm{a}_1 < \gamma$ for some value $\gamma \in [0, 1)$, then a bound on  $\abs{y_t}$ based on the initial condition given $u = 0$ may be found as $\forall t\geq 0: \ \abs{y_t} \leq \gamma^{t/n_a + 1}\max_{t'\in 0..n_a-1} \abs{x_{-t'}}$ (Theorem A.1.(a) of \citep{polyak2002superstable} when starting at time $t=1$).
The superstability norm constraint $\norm{a^{cl}}_1 < 1$ from  \eqref{eq:closed_loop_coeff} is affine-expressible in the compensator parameters $(\tilde{a}, \tilde{b})$. 


Superstability may be imposed for classes of plants. A single controller $C$ can simultaneously superstabilize a set $(a, b)$ of plants if the closed-loop system \eqref{eq:closed_loop} is superstable for each individual plant.






\subsection{Sum of Squares}

\Iac{BSA} set $\K$ is the locus of a finite number of bounded-degree polynomial inequality and equality constraints:
\begin{align}
    \K = \left\{x \in \R^n \mid g_k(x) \geq 0, h_{k'}(x) = 0\right\},
\end{align}
for all indices $k=1..N_i, \ k' = 1..N_e$. \ac{BSA} sets may be intersected by concatenating their describing polynomials $\{g\}$ and $\{h\}$. 
The $x$-projection operator $\pi^x: X \times Y \rightarrow X$ is $\pi^x: (x, y) \mapsto x$.
\Iac{BSA} set $\K \subset X \times Y$ has an $x$-projection $\pi^x \K = \{x \in X \mid (x, y) \in \K\}$.
\ac{BSA} sets are not closed under projection, instead the projection of a \ac{BSA} set is generically the union of disjoint \ac{BSA} sets. 

A polynomial is nonnegative if $\forall x \in \R^n: \ p(x) \geq 0$. A polynomial $p(x)$ is \ac{SOS} if there exists an integer $s$, a polynomial vector $v \in (\R[x])^s$, and an $s \times s$ \ac{PSD} \textit{Gram} matrix $Q \succeq 0$ such that $p(x) = v(x)^T Q v(x)$. \ac{SOS} polynomials are a subset of nonnegative polynomials, given that the square of any real number is nonnegative. The set of \ac{SOS} polynomials is $\Sigma[x] \subset \R[x]$, and its subset of degree-$2d$ polynomials is $\Sigma[x]_{\leq 2d} \subset \R[x]_{\leq 2d}$ (\ac{SOS} polynomials are always even).

A sufficient condition for a polynomial $p(x)$ to be positive over a \ac{BSA} set $\mathbb{K}$ is \citep{putinar1993compact}
\begin{subequations}
\label{eq:putinar}
    \begin{align}
        & p(x) = \sigma_0(x) + \textstyle \sum_i {\sigma_i(x)g_i(x)} + \textstyle \sum_j {\phi_j(x) h_j}\\
        &\exists  \sigma_0(x) \in \Sigma[x], \quad \sigma(x) \in (\Sigma[x])^{N_g}, \quad \phi \in (\R[x])^{N_h}. \label{eq:putinar_variables}
    \end{align}
\end{subequations}
The \ac{WSOS} set $\Sigma[\K]$ is the set of polynomials that admit a certificate in \eqref{eq:putinar} (called a Putinar Psatz). The Putinar certificate in \eqref{eq:putinar} is necessary and sufficient if an \textit{Archimedean} condition holds: $\exists R > 0 \mid \ R-\norm{x}_2^2 \in \Sigma[\K]$. Every Archimedean \ac{BSA} set is compact, and compact sets may be rendered Archimedean by adjoining $R-\norm{x}^2_2 \geq 0$ to the descriptor constraints if a valid $R$ is previously known. The moment-\ac{SOS} hierarchy for optimization involves increasing the degree $2d$ to obtain higher-order Putinar multipliers \eqref{eq:putinar_variables} \citep{lasserre2009moments}. 
Certifying that a degree-$2d$ polynomial $p(x)$ is \ac{SOS} requires $1+N_g$ Gram matrices of maximal size $\binom{n+d}{d}$, and the approximate per-iteration runtime of an \ac{SDP} originating from the moment-\ac{SOS} hierarchy is approximately $O(n^{6d})$ when $d$ is fixed. 




\section{Superstabilizing Control}
\label{sec:full}
This section presents \iac{POP} to perform superstabilizing control of \ac{EIV} \ac{ARX} models.

\subsection{Problem Statement}
A set of input-output observations $\dc = (\hat{u}, \hat{y})$ are recorded for an \ac{ARX} dynamical system within a time horizon of $T$. The ground-truth is the input-output data $(u, y)$ following dynamics \eqref{eq:arx_ground_truth}. 
The records in $\dc$ are corrupted by $L_\infty$-bounded input noise $(\du \in \R^{T+n_b-1})$ and measurement noise  $(\dy \in \R^{T+n_a})$ as
\begin{subequations}
\label{eq:dydu_corruption}
\begin{align}
    \hat{y} &= y + \dy, & &\norm{\dy}_\infty \leq \epsilon_y \\
    \hat{u} &= u + \du, & &\norm{\du}_\infty \leq \epsilon_u.
\end{align}
\end{subequations}
The combination of input and measurement noise is the \ac{EIV} setting.
Substitution of \eqref{eq:dydu_corruption} into \eqref{eq:arx_ground_truth} yields
\begin{align}
    \hat{y}_{t} - \dy_t &= \left(-\textstyle \sum_{i=1}^{n_a} a_i \hat{y}_{t-i} + \sum_{i=1}^{n_b} b_i \hat{u}_{t-i}\right) \nonumber \\
    &- \left(-\textstyle \sum_{i=1}^{n_a} a_i \dy_{t-i} + \sum_{i=1}^{n_b} b_i \du_{t-i}\right).  \label{eq:arx_bilinear}
\end{align}

\subsection{Consistency Sets}

The $(\du,\dy)$-constant terms in \eqref{eq:arx_bilinear} may be written as
\begin{align}
\label{eq:dudy_constant}
    h_t = \hat{y}_t + \textstyle \sum_{i=1}^{n_a} a_i \hat{y}_{t-i} - \sum_{i=1}^{n_b} b_i \hat{u}_{t-i}.
\end{align}

Eq. \eqref{eq:arx_bilinear} may therefore be expressed  in terms of $h_t$ as
\begin{align}
    0 = h_t + \left(-\textstyle \sum_{i=1}^{n_a} a_i \dy_{t-i} + \sum_{i=1}^{n_b} b_i \du_{t-i}\right) - \dy_t. \label{eq:arx_bilinear_h}
\end{align}

The set of parameters $(a, b)$ and noise values $(\du, \dy)$ consistent with data $\dc$ is
\begin{equation}
\label{eq:bpc}
    \bpc: \left\{\begin{array}{c|c}
    a \in \R^{n_a}, \ b \in \R^{n_b} & \norm{\du}_\infty \leq \epsilon_u  \\
    \du \in \R^{T+n_b-1}  & \norm{\dy}_\infty \leq \epsilon_y \\
    \dy \in \R^{T+n_a} & \textrm{Eq.  \eqref{eq:arx_bilinear_h}} \ \forall t = 1..T \\
    \end{array}\right\}.
\end{equation}

The set of parameters $(a, b)$ consistent with data in $\dc$ is
\begin{equation}
    \label{eq:pc}
    \pc(a, b)  = \pi^{a,b} \bpc(a,b,\du, \dy).
\end{equation}
Equivalently, a plant $(a, b)$ is a member of $\pc$ if there exists an admissible $L_\infty$-bounded noise process $(\du, \dy)$ that could have generated $\dc$.

The following assumption will be required to obtain convergence:
\begin{itemize}
    \item[A1] The set $\bpc$ is compact.
\end{itemize}

\begin{rem}
Compactness of $\bpc$ by A1 implies that its projection $\pc$ is also compact.
\end{rem}

\begin{rem}
Determining membership (if a fixed plant $(a, b) \in \pc$) is  \iac{LP} with variables $\du, \dy$, given that $\bpc$ has a bilinear description.
\end{rem}

\subsection{Full Program}

The coefficients $a^{cl}$ have maximal length $n_{cl} = \tilde{n}_a + n_a$.
\begin{prob}
\label{prob:full_function}
A program to perform superstabilizing control (or to find an infeasibility certificate if $\gamma^* \geq 1$) is
\begin{subequations}
\label{eq:full_function}
\begin{align}
    \gamma^* =& \min_{\gamma \in \R, (\tilde{a}, \tilde{b})}  \gamma   &  \forall (a, b, \du, \dy) \in \bpc: \  \gamma \geq \norm{a^{cl}}_1. & 
\end{align}
\end{subequations}
\end{prob}

The $L_1$-norm constraint in \eqref{eq:full_function} may be equivalently represented by a lifted sequence of inequalities. The $L_1$-norm of a vector $x\in \R^n$ may also be expressed as $\norm{x}_1 = \min_{m \in \R^{n_{cl}}} \sum_i m_i: \ -m_i \leq x_i \leq m_i$ \citep{gouveia2013lifts}. Letting $m_i(a, b): \pc \rightarrow \R$ be a set of functions for $i=1..n_a + \tilde{n}_a$, expression \eqref{eq:full_function} may be written as
\begin{subequations}
\label{eq:full_function_m}
\begin{align}
    &\gamma^* =\min_{\gamma, (\tilde{a}, \tilde{b})}  \gamma \\
    & \quad \forall (a, b, \du, \dy) \in \bpc: \\
    & \qquad \gamma - \textstyle \sum_{i=1}^{n_{cl}} m_{i}(a,b)\geq 0 \label{eq:m_gamma}\\
    & \qquad m_{i}(a,b) - a^{cl}_i(a,b)\geq 0 & & \forall i=1..n_{cl} \label{eq:m_pos}\\
    & \qquad m_{i}(a,b) + a^{cl}_i(a,b) \geq 0 & & \forall i=1..n_{cl}.\label{eq:m_neg}
\end{align}
\end{subequations}


\subsection{Full Sum-of-Squares Program}

Problem \ref{prob:full_function} may be solved using \ac{SOS} programming. The nonnegativity constraints in \eqref{eq:m_gamma}-\eqref{eq:m_neg} may each be realized as Psatz constraints in the sense of \eqref{eq:putinar}. 
The degree-$d$ polynomial restriction to Problem \ref{prob:full_function} is presented in  Eq. \eqref{eq:super_full_wsos}. The decision variables of  Eq. \eqref{eq:super_full_wsos} are the $L_1$-certificates $m$ and the gain $\gamma$.
The $\geq$ symbols in \eqref{eq:full_function_m} are tightened to $>$ in Eq. \eqref{eq:super_full_wsos} due to  the Putinar Psatz's \eqref{eq:putinar} positivity certificate. 
\begin{subequations}
\label{eq:super_full_wsos}
\begin{align}
\gamma_d^*&= \min_{\gamma \in \R} \gamma \\    
& \tilde{a}\in \R^{n_a}, \ \tilde{b} \in \R^{n_b}\\
    &m_i \in (\R[a, b])^{n_{cl}}_{\leq 2d}  & & \forall i \in 1..n_{cl}\\    
    & \gamma - \textstyle \sum_{i=1}^{n_{cl}} m_i(a, b) \in \Sigma[\bpc]_{\leq 2d} \label{eq:super_full_wsos_put_gamma}\\
    &  m_{i}(a,b) - a^{cl}_i(a,b)\in \Sigma[\bpc]_{\leq 2d} & & \forall i=1..n_{cl} \label{eq:super_full_wsos_put_p}\\
    &  m_{i}(a,b) + a^{cl}_i(a,b)\in \Sigma[\bpc]_{\leq 2d}& &\forall i=1..n_{cl}.\label{eq:super_full_wsos_put_m}
\end{align}
\end{subequations}
\begin{thm}
\label{thm:m_cont_selection}
The functions $m_i$ have continuous selections.
\end{thm}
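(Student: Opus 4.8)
The plan is to exhibit an \emph{explicit} continuous selection rather than appeal to an abstract selection theorem. Recall from \eqref{eq:m_pos}--\eqref{eq:m_neg} that any feasible choice of the functions $m_i$ must satisfy $m_i(a,b) \geq \abs{a^{cl}_i(a,b)}$ pointwise, while the objective \eqref{eq:m_gamma} together with the lifted $L_1$ representation drives $\sum_i m_i(a,b)$ down to $\norm{a^{cl}(a,b)}_1$. This motivates the candidate selection $m_i^\star(a,b) := \abs{a^{cl}_i(a,b)}$, and the whole argument reduces to checking that this map is continuous and feasible on $\pc$.

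First I would establish that each closed-loop coefficient $a^{cl}_i$ is a polynomial in $(a,b)$. From \eqref{eq:closed_loop_coeff}, expanding $(1+A)(1+\tilde A) + B\tilde B - 1 = A + \tilde A + A\tilde A + B\tilde B$ shows that, for the fixed compensator parameters $(\tilde a, \tilde b)$, each entry $a^{cl}_i$ is an affine (linear-plus-constant) function of the plant parameters $(a,b)$, since $A,B$ are linear in $a,b$ and the cross terms $A\tilde A, B\tilde B$ are linear in $(a,b)$ once $(\tilde a,\tilde b)$ is fixed. In particular $a^{cl}_i \in \R[a,b]$, hence is continuous on all of $\R^{n_a+n_b} \supseteq \pc$.

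Next I would compose with the absolute value. Since $\abs{\cdot}:\R\to\R$ is continuous and compositions of continuous maps are continuous, $m_i^\star = \abs{a^{cl}_i}$ is continuous on $\pc$. Feasibility is then immediate: $m_i^\star - a^{cl}_i = \abs{a^{cl}_i} - a^{cl}_i \geq 0$ and $m_i^\star + a^{cl}_i \geq 0$ verify \eqref{eq:m_pos}--\eqref{eq:m_neg}, and $\sum_i m_i^\star(a,b) = \norm{a^{cl}(a,b)}_1$ shows $m^\star$ is in fact the minimal feasible selection, attaining the value of \eqref{eq:full_function_m} pointwise. This already yields the claim.

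The only delicate point, and the place I would expect scrutiny, is continuity across the zero set $\{a^{cl}_i = 0\}$, where $\abs{\cdot}$ fails to be differentiable; but the statement asks only for continuity, not smoothness, and $\abs{\cdot}$ is continuous there, so no genuine obstruction arises. If an abstract argument were preferred, the set-valued map $(a,b)\mapsto \{m : m_i \geq \abs{a^{cl}_i(a,b)}\}$ is lower hemicontinuous with closed convex values, so Michael's selection theorem would also deliver a continuous selection; the explicit formula, however, makes this machinery unnecessary and additionally certifies optimality of $m^\star$. The downstream payoff, which is presumably the reason for stating the result, is that because $\bpc$ is compact (A1) and Archimedean, the continuous $m_i^\star$ can be approximated uniformly by polynomials via Stone--Weierstrass, enabling convergence of the degree-$d$ \ac{SOS} relaxation \eqref{eq:super_full_wsos} to Problem \ref{prob:full_function}.
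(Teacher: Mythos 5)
Your proof is correct, but it takes a genuinely different route from the paper. The paper argues abstractly: it forms the set-valued map $\mathcal{M}_{\gamma,C}:(a,b)\mapsto\{m : \sum_i m_i \leq \gamma,\ m_i \geq \pm a^{cl}_i(a,b)\}$, invokes a result of Mangasarian to establish lower semicontinuity of this polyhedral solution map under right-hand-side perturbations, and then applies Michael's selection theorem (lower semicontinuous map, closed convex images, compact domain, Banach target) to conclude that a continuous selection exists. You instead exhibit the explicit selection $m_i^\star = \abs{a^{cl}_i}$, observe that each $a^{cl}_i$ is affine in $(a,b)$ once $(\tilde a,\tilde b)$ is fixed (a fact the paper also uses), and check feasibility directly: the two sign constraints are immediate, and the budget constraint $\gamma \geq \sum_i m_i^\star = \norm{a^{cl}}_1$ holds on $\pc$ precisely because $(\gamma,\tilde a,\tilde b)$ is feasible for Problem \ref{prob:full_function} — the same implicit feasibility hypothesis the paper's proof needs for $\mathcal{M}_{\gamma,C}$ to have nonempty images. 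Your construction buys more than the paper's: the selection is not merely continuous but globally Lipschitz and piecewise polynomial, and it is the componentwise \emph{minimal} feasible selection, so the $L_1$ lift introduces no conservatism and the Stone--Weierstrass step in Theorem \ref{thm:m_poly} applies verbatim. What the abstract machinery buys in exchange is robustness: Michael's theorem would still deliver a selection if the feasible region for $m$ carried additional coupling constraints that rule out the componentwise minimal choice, whereas your explicit formula exploits the fact that the constraints on distinct $m_i$ decouple (apart from the single budget inequality, which $m^\star$ satisfies automatically under feasibility).
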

\begin{proof}
 Let $\mathcal{M}_{\gamma, C}:  \pc \rightrightarrows \R^{n_{cl}}$ be the set-valued map defining the closed convex solution region $(a, b) \mapsto \{m \in \R^{n_{cl}} \mid \sum_{i=1}^{n_{cl}} m_i \leq \gamma, \ \forall i: m_i \geq \pm a_i^{cl}(a, b) \}$ from \eqref{eq:full_function_m} given $(\gamma, \tilde{a}, \tilde{b})$. Note that the functions $a^{cl}_i(a, b)$ from \eqref{eq:closed_loop_coeff} are linear (continuous) functions of $(a, b)$ given $(\tilde{a}, \tilde{b})$. Theorem 2.4 of \citep{mangasarian1987lipschitz} proves that $\mathcal{M}$ is a lower-semicontinous map (image of linear inequalities under perturbations in the right-hand side). Michael's Theorem (9.1.2 in \citep{aubin2009set}) suffices to show that a continuous selection exists, because $\mathcal{M}_{\gamma, C}$ is lower-semicontinuous with closed convex images, $\pc$ is compact, and $\R^{n_{cl}}$ is a Banach space. 
\end{proof}

\begin{thm}
\label{thm:m_poly}
The bounds  from Eq. \eqref{eq:super_full_wsos} will converge to $\lim_{d \rightarrow \infty} \gamma^*_d = \gamma^*$ from Problem \ref{prob:full_function} with $\gamma_d^* \geq \gamma_{d+1}^* \geq \ldots \gamma^*$ when $\bpc$ is Archimedean.
\end{thm}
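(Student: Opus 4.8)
The plan is to split the claim into its two components: the monotone lower bound $\gamma_d^* \geq \gamma_{d+1}^* \geq \gamma^*$, which follows from the nesting of feasible sets and a pointwise argument, and the convergence $\gamma_d^* \to \gamma^*$, which is the substantive part and rests on the Archimedean hypothesis through the Putinar certificate \eqref{eq:putinar}. I would treat the bound and monotonicity first since they are elementary, then concentrate on the limit.

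First, I would show that any feasible point of \eqref{eq:super_full_wsos} induces a feasible point of Problem~\ref{prob:full_function} of the same objective, giving $\gamma_d^* \geq \gamma^*$. Membership in $\Sigma[\bpc]_{\leq 2d}$ forces the three families \eqref{eq:super_full_wsos_put_gamma}--\eqref{eq:super_full_wsos_put_m} to be nonnegative pointwise on $\bpc$, so that $\sum_i m_i \leq \gamma$ and $m_i \geq \pm a^{cl}_i$, whence $\norm{a^{cl}}_1 = \sum_i \abs{a^{cl}_i} \leq \sum_i m_i \leq \gamma$ on all of $\bpc$. Monotonicity $\gamma_d^* \geq \gamma_{d+1}^*$ is then immediate: a degree-$2d$ Putinar certificate is also a valid degree-$2(d+1)$ certificate (pad the Gram matrices and multipliers with zeros), so the feasible set of \eqref{eq:super_full_wsos} only grows with $d$ and its minimum cannot increase.

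The work lies in proving $\lim_{d} \gamma_d^* \leq \gamma^*$. Fix $\epsilon > 0$ and a compensator $(\tilde{a}, \tilde{b})$ feasible for Problem~\ref{prob:full_function} whose worst-case objective over $\bpc$ is at most $\gamma^* + \epsilon/2$; set $\gamma = \gamma^* + \epsilon/2$. By Theorem~\ref{thm:m_cont_selection} there exist continuous selections $m^c_i : \pc \to \R$ obeying $\sum_i m^c_i \leq \gamma$ and $m^c_i \geq \pm a^{cl}_i$ on the compact set $\pc$. I would then invoke the Stone--Weierstrass theorem to approximate each $m^c_i$ uniformly on $\pc$ by a polynomial $\hat{m}_i \in \R[a,b]$ to within error $\eta$, and take $m_i = \hat{m}_i + 2\eta$. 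Choosing $\eta$ of order $\epsilon/n_{cl}$ renders all three constraint polynomials $\gamma - \sum_i m_i$, $m_i - a^{cl}_i$, and $m_i + a^{cl}_i$ strictly positive on $\pc$, and hence strictly positive on $\bpc$ as well, since they depend only on $(a,b)$ and $\pc = \pi^{a,b}\bpc$.

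With strict positivity on the compact, and by hypothesis Archimedean, set $\bpc$ secured, Putinar's Positivstellensatz supplies a certificate of the form \eqref{eq:putinar} for each of the finitely many constraint polynomials; taking $d$ to be the largest degree appearing among these certificates makes $(\gamma, \tilde{a}, \tilde{b}, \{m_i\})$ feasible for \eqref{eq:super_full_wsos}, so $\gamma_d^* \leq \gamma = \gamma^* + \epsilon/2 \leq \gamma^* + \epsilon$. Sending $\epsilon \downarrow 0$ and combining with the monotone lower bound yields $\lim_{d\to\infty} \gamma_d^* = \gamma^*$. I expect the main obstacle to be the simultaneous strictification step: Theorem~\ref{thm:m_cont_selection} only furnishes the \emph{non-strict} inequalities, which may be active at the optimum, whereas Putinar's certificate demands strict positivity, so the polynomial approximants must be shifted by a uniform margin that is carefully controlled against the perturbation budget $\epsilon$ without violating the sum constraint \eqref{eq:m_gamma}.
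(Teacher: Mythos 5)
Your overall route is the same as the paper's: obtain continuous certificates $m_i^c$ on the compact set $\pc$ via Theorem \ref{thm:m_cont_selection}, approximate them by polynomials via Stone--Weierstrass with an upward shift to create a positivity margin, invoke Putinar's Positivstellensatz on the Archimedean set $\bpc$ to get finite-degree certificates, and derive monotonicity from the nesting $\Sigma[\bpc]_{\leq 2d} \subset \Sigma[\bpc]_{\leq 2(d+1)}$. (Your explicit verification of the lower bound $\gamma_d^* \geq \gamma^*$ is a useful addition the paper leaves implicit.)

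However, one step fails as written, and it is exactly the obstacle you flag in your closing sentence without resolving. After setting $m_i = \hat{m}_i + 2\eta$, you claim that choosing $\eta$ of order $\epsilon/n_{cl}$ makes all three constraint polynomials strictly positive, including $\gamma - \sum_i m_i$. This is false in general: the continuous selection only guarantees $\sum_i m_i^c \leq \gamma$, and this inequality may be tight, since the images of the set-valued map $\mathcal{M}_{\gamma, C}$ contain points on the hyperplane $\sum_i m_i = \gamma$ and Michael's theorem gives no control over which points the selection picks. Consequently $\gamma - \sum_i m_i \geq (\gamma - \sum_i m_i^c) - 3 n_{cl}\eta$ can be as negative as $-3 n_{cl}\eta$, so the tuple with objective value $\gamma$ itself need not be feasible for \eqref{eq:super_full_wsos} at any degree $d$. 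The repair is precisely what the paper does: certify an \emph{inflated} objective rather than $\gamma$ --- the paper's feasible point is $(\gamma + n_{cl}\epsilon, \tilde{m}_i)$, so the upward shift of the $m_i$ is absorbed by relaxing \eqref{eq:m_gamma}, and the inflation vanishes as $\epsilon \rightarrow 0$, leaving the limit unchanged. Equivalently, in your notation: run the selection argument at level $\gamma^* + \epsilon/2$ but certify the objective $\gamma^* + \epsilon$, keeping slack $\epsilon/2 > 3 n_{cl}\eta$ for the sum constraint. With this one-line change your argument coincides with the paper's proof.
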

\begin{proof}
Let $(\gamma, m)$ be a solution to Problem \ref{prob:full_function} with each $m_i(a, b)$ continuous over $\pc$ (by Theorem \ref{thm:m_cont_selection}). For each $\epsilon >0$, there exists functions $\tilde{m_i}(a, b) \in \R[a,b]$ such that $\sup_{(a,b) \in \pc} \abs{(m_i(a,b) +\epsilon)- \tilde{m}(a,b)} \leq \epsilon$ by the Stone Weierstrass theorem over the compact $\pc$. Define $r_i(a, b) = m_i(a,b) + \epsilon - \tilde{m}_i(a,b)$ as the approximation error.
The finite-degree polynomial variates $(\gamma + n_{cl} \epsilon, \tilde{m_i})$ are feasible solutions to \eqref{eq:full_function} with
\begin{subequations}
\begin{align}
0 \leq &m_i(a, b) - a_{cl}(a, b) \leq \tilde{m}_i(a,b) - a_{cl}(a,b) \label{eq:sw_m}\\
0 \leq & \gamma - \textstyle \sum_{i} m_i(a,b)  \leq  (\gamma + n_{cl}) \epsilon - \textstyle \sum_{i} \tilde{m}_i(a,b). \label{eq:sw_gamma}
\end{align}
\end{subequations}

The multipliers for Putinar Psatz certificates for \eqref{eq:sw_m} and \eqref{eq:sw_gamma} have finite (exponential) degree in terms of $\deg \tilde{m_i}$ \citep{nie2007complexity}. Therefore it holds that for each $\epsilon$, there exists some degree $d$ such that $\gamma_d = \gamma + n_{cl} \epsilon$. Because $\lim_{\epsilon \rightarrow 0} \gamma^* + n_{cl} \epsilon = \gamma^*$, it holds that $\lim_{d \rightarrow \infty} \gamma_d^* = \gamma^*$. The sequence is decreasing with $\gamma_d^* \geq \gamma_{d+1}^*$ because the \ac{WSOS} cones satisfy $\Sigma[\bpc]_{\leq 2d} \subset \Sigma[\bpc]_{\leq 2(d+1)}$.

\end{proof}


\section{Alternatives Control}
\label{sec:altern}

 Eq. \eqref{eq:super_full_wsos} involves a total of $2(n_a + n_b + T)-1$ variables $(a, b, \du, \dy)$. The Psatz expressions such as in \eqref{eq:super_full_wsos_put_gamma} will therefore have Gram matrices of size $\binom{2(n_a + n_b + T) + d}{d}$ at each fixed degree $d$. Performance of  Eq. \eqref{eq:super_full_wsos} is therefore polynomial in $T$ for fixed $(n_a,n_b, d)$, and is jointly combinatorial in all parameters. 

A theorem of alternatives may be utilized to eliminate the noise terms $(\du, \dy)$ from Putinar expressions. This Alternatives algorithm scales in a linear manner based on $T$ and possesses Gram matrices of maximal size  $\binom{n_a+n_b + d}{d}$. 
Letting $n_a = 3, n_b = 2, T = 10$, this maximal size is $\binom{15}{2}=465$ for Full and $\binom{5+1}{1} = 6$ for Alternatives.



\subsection{ARX Alternatives Psatz}

Let $q(a, b)$ be a function defined over $\pc$. Because $q(a,b)$ is a function of $(a,b)$ alone, the following positivity criteria are equivalent by projection:
\begin{subequations}
\label{eq:q_forall}
\begin{align}
    q(a,b)&> 0 & &\forall (a,b) \in \pc \\
    q(a,b)&> 0 & &\forall (a,b,\du,\dy) \in \bpc.
\end{align}
\end{subequations}
The following statement is a strong alternative to \eqref{eq:q_forall}:
\begin{align}
\label{eq:q_exists}
    \exists(a,b, \du, \dy) & \in \bpc: & -q(a, b) \geq 0.
\end{align}
Dual variable functions may be defined according to the Putinar multipliers in constraint description \eqref{eq:bpc} with
\begin{subequations}
\label{eq:dual_multipliers}
\begin{align}
    &\psi_t^\pm(a, b) \geq 0 & & \forall t=(-n_b+1)..T-1 \label{eq:dual_multipliers_psi}\\
    &\zeta_t^\pm(a, b) \geq 0 & & \forall t=(-n_a+1)..T \label{eq:dual_multipliers_lambda}\\
    &\mu_t(a, b)  & & \forall t=1..T.
\end{align}
\end{subequations}
The $(a,b)$ dependence in terms $(\psi^\pm, \zeta^\pm, \mu)$ will be omitted to simplify notation. Additionally, the term $\psi^+$ will refer to the vector $\{\psi^+_t\}_{t=1}^T$ (with similar vectorial definitions for $\psi^-, \zeta^\pm, \mu$).
The nonnegativity constraints in \eqref{eq:dual_multipliers_psi} and \eqref{eq:dual_multipliers_lambda} are required to hold for all $(a,b) \in \pc$.

A weighted sum $\Phi$ may be developed from $q$,  multipliers in \eqref{eq:dual_multipliers}, and the description \eqref{eq:bpc}, by forming
\begin{align}
    \Phi &= -q(a, b) +\textstyle \sum_{t=1}^{T} \mu_t h_t \label{eq:phi_all}  \\
    &+ \textstyle \sum_{t=1}^{T} \mu_t (\sum_{i=1}^{n_b} b_i \du_{t-i} -\textstyle \sum_{i=1}^{n_a} a_i \dy_{t-i}  - \dy_t) \nonumber\\
    &+ \textstyle \sum_{t=-(n_b-1)}^{T-1} \left(\psi^+_{t}(\epsilon_u - \du_t) +\psi^-_{t}(\epsilon_u + \du_t)\right) \nonumber\\
    &+\textstyle \sum_{t=-(n_a-1)}^{T} \left(\zeta_{t}^+(\epsilon_y - \dy_t) +\zeta^-_{t}(\epsilon_y + \dy_t)\right). \nonumber\\
\intertext{The terms in \eqref{eq:phi_all} that are constant in $(\du, \dy)$ may be collected into}
    Q(a,b) &= -q(a,b) + \epsilon_u \1^T (\psi^+ + \psi^-) \label{eq:Q_const}\\ 
    &+ h^T \mu + \epsilon_y \1^T(\zeta^+ + \zeta^-). \nonumber
    \end{align}
Using the cross-correlation operator $\star$ from \eqref{eq:cross_correlation}, Eq. \eqref{eq:phi_all} may be rewritten as
    \begin{align}
 \Phi &= Q(a,b) + (\mu \star b)^T \du - (\mu \star [1;a])^T \dy \label{eq:phi_collect}\\
 &+ (\psi^- - \psi^+)^T \du + (\zeta^- - \zeta^+)^T \dy. \nonumber
\end{align}
\begin{align}
\intertext{A Lagrangian dual function $g(a,b)$ may be defined as}
    g(a,b) &= \sup_{\du \in \R^{T+n_b}, \ \dy \in \R^{T+n_a}} \Phi(a,b,\du,\dy).
    \intertext{The coefficients of $(\du, \dy)$ in \eqref{eq:phi_collect} must be zero in order to ensure that $g(a, b)$ is bounded. The value of this dual function is}
    g(a,b) &= \begin{cases}Q(a, b) & \psi^+ - \psi^- = 
    \mu \star b \\
    & \zeta^+ - \zeta^- 
    = \mu \star [1;a] \\
    \infty & \textrm{Else}. \end{cases}
\end{align}

\begin{prob}
\label{prob:altern_feas}
A feasibility program to certify \eqref{eq:q_forall} is
\begin{subequations}
\label{eq:altern_feas}
\begin{align}
    \find_{\psi^\pm, \ \zeta^\pm, \ \mu \ \textrm{from \eqref{eq:dual_multipliers}}} & -Q(a,b) > 0 \quad \forall (a,b) \in \pc\\ 
    & \psi^+ - \psi^- = 
    \mu \star b \\
    & \zeta^+ - \zeta^- 
    = \mu \star [1;a]. 
\end{align}
\end{subequations}
\end{prob}

\begin{thm}
Problem \ref{prob:altern_feas} certifies  \eqref{eq:q_forall} and is a strong alternative of \eqref{eq:q_exists}.
\end{thm}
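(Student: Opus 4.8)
The plan is to prove the two assertions separately: first that Problem~\ref{prob:altern_feas} certifies the universal positivity statement \eqref{eq:q_forall}, and second that this certification is a \emph{strong} alternative to the existential statement \eqref{eq:q_exists}, meaning exactly one of the two systems is feasible. Both parts flow from the Lagrangian duality construction already set up in \eqref{eq:phi_all}--\eqref{eq:phi_collect}, so most of the work is interpretation rather than new computation.

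For the first assertion, I would argue as follows. Suppose the multipliers $(\psi^\pm, \zeta^\pm, \mu)$ are feasible for \eqref{eq:altern_feas}. Fix any point $(a,b,\du,\dy) \in \bpc$. Every term added to $-q(a,b)$ in forming $\Phi$ in \eqref{eq:phi_all} is a product of a sign-definite multiplier with a constraint quantity that is nonnegative on $\bpc$: the $\psi^\pm(\epsilon_u \mp \du_t)$ and $\zeta^\pm(\epsilon_y \mp \dy_t)$ terms are nonnegative by the $L_\infty$ bounds in \eqref{eq:bpc}, while the $\mu_t$-weighted terms vanish identically because \eqref{eq:arx_bilinear_h} holds on $\bpc$. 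Hence $\Phi \geq -q(a,b)$ pointwise on $\bpc$. On the other hand, the two equality constraints of \eqref{eq:altern_feas} force the $(\du,\dy)$-coefficients in \eqref{eq:phi_collect} to vanish, so $\Phi = Q(a,b)$ identically. Combining, $-q(a,b) \leq Q(a,b)$ on $\bpc$, and since $-Q(a,b) > 0$ is imposed for all $(a,b)\in\pc$, we get $q(a,b) \geq -Q(a,b) > 0$ for every $(a,b)\in\pc$, which is precisely \eqref{eq:q_forall} after projecting via \eqref{eq:pc}. The equivalence of the two forms in \eqref{eq:q_forall} is already established by projection since $q$ depends only on $(a,b)$.

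For the strong-alternative claim I would invoke linear programming duality. For each fixed $(a,b)$, the inner problem defining $g(a,b)$ is an unconstrained linear program in $(\du,\dy)$ maximizing $\Phi$; its dual feasibility conditions are exactly the two cross-correlation equalities, and boundedness of the primal sup is equivalent to those equalities holding, in which case the optimal value is $Q(a,b)$. The existence of $(\du,\dy)$ with $-q(a,b)\geq 0$ on $\bpc$ in \eqref{eq:q_exists} is therefore the primal feasibility side, and Problem~\ref{prob:altern_feas} is the dual side; by the theorem of alternatives for linear systems (a consequence of LP strong duality or Farkas' lemma applied to the $L_\infty$-box constraints plus the linear equality \eqref{eq:arx_bilinear_h}), exactly one of \eqref{eq:q_exists} and \eqref{eq:altern_feas} is feasible at each $(a,b)$. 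Quantifying appropriately over $\pc$ yields that the feasibility of the multiplier system is a strong alternative to the existence statement.

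The main obstacle I anticipate is handling the $(a,b)$-dependence carefully, since the alternative is not a single finite-dimensional LP but a parametrized family: the multipliers $\psi^\pm,\zeta^\pm,\mu$ are \emph{functions} of $(a,b)$ over $\pc$, and the strict inequality $-Q(a,b)>0$ must hold uniformly. I would need to verify that the theorem of alternatives can be applied pointwise in $(a,b)$ and then lifted to the universal quantifier without a gap, and that strictness is preserved (the strong, as opposed to weak, alternative requires ruling out the boundary case where both the primal infimum and the dual are zero). I expect strictness to be the delicate point, and would address it by appealing to the compactness of $\pc$ under assumption A1 together with the fact that the $L_\infty$ box in \eqref{eq:bpc} has nonempty interior, guaranteeing a Slater-type condition so that LP strong duality gives a genuine strong alternative rather than merely a weak one.
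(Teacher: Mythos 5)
Your proposal is correct and takes essentially the same approach as the paper: the certification direction is the identical weak-duality argument (nonnegative multipliers times valid constraints, with the equality conditions forcing $\Phi \equiv Q$), stated directly rather than by contradiction, and the strong-alternative direction rests on the same duality fact for constraints affine in the eliminated variables $(\du,\dy)$ that the paper cites from Section 5.8 of \citep{boyd2004convex}. The functional $(a,b)$-dependence of the multipliers that you flag as the delicate point is exactly what the paper defers to Proposition \ref{prop:cont}.
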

\begin{proof}
\textit{Sufficiency:} The statement $g(a,b) < 0$ is a sufficient condition for invalidation of \eqref{eq:q_exists}. Assuming that \eqref{eq:q_exists} holds with $-q \geq 0$, then $\Phi$ is constructed by the addition of the nonnegative $-q$ plus nonnegative weights $(\phi^\pm, \zeta^\pm)$ times nonnegative constraints $(\epsilon_u, \epsilon_y)$ plus free weights $\mu_t$ times data-consistency equality conditions \eqref{eq:arx_bilinear_h}. The supremal value of a nonnegative term $\Phi$ being negative with $g(a,b) <0$ is a contradiction.

\textit{Necessity:} The constraints in \eqref{eq:bpc} are affine  $(\du, \dy)$. Necessity follows if constraints are concave (including affine) in the eliminated variables by Section 5.8 \citep{boyd2004convex}.
\end{proof}

\begin{prop}
\label{prop:cont}
The multiplier functions $(\psi^\pm, \ \zeta^\pm, \ \mu)$ may each be chosen to be polynomial in the compact set $\pc$.
\end{prop}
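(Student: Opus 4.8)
The plan is to establish Proposition~\ref{prop:cont} by the same two-stage template used in Theorems~\ref{thm:m_cont_selection} and~\ref{thm:m_poly}: first produce a \emph{continuous} feasible assignment of the multipliers over $\pc$, and then approximate it by polynomials in a way that preserves \emph{exactly} the coupling equalities of Problem~\ref{prob:altern_feas} and the sign constraints \eqref{eq:dual_multipliers}. The new difficulty relative to Theorem~\ref{thm:m_cont_selection} is that the equalities $\psi^+-\psi^-=\mu\star b$ and $\zeta^+-\zeta^-=\mu\star[1;a]$ tie the nonnegative multipliers to the free multiplier $\mu$, so the three families cannot be approximated independently.

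First I would form the polyhedral set-valued map $\mathcal{N}:\pc\rightrightarrows\R^{N}$, where $N$ is the total number of multiplier components, sending $(a,b)$ to the tuples $(\psi^\pm,\zeta^\pm,\mu)$ satisfying $\psi^\pm\ge 0$, $\zeta^\pm\ge 0$, both $\star$-equalities, and $-Q(a,b)\ge 0$. Each image is a closed convex polyhedron, nonempty by feasibility of Problem~\ref{prob:altern_feas}, and its defining data are polynomial, hence continuous, in $(a,b)$ (the $\star$-equalities contribute $(a,b)$-dependent coefficients on $\mu$, so the perturbation is in the data, not merely the right-hand side). Fixing a $\delta>0$ for which the tightened images—those additionally imposing $-Q\ge\delta$—stay nonempty, which is possible by strict feasibility and compactness of $\pc$ (Assumption~A1), Theorem~2.4 of \citep{mangasarian1987lipschitz} gives lower semicontinuity of this solution map and Michael's Theorem (9.1.2 of \citep{aubin2009set}) supplies a continuous selection obeying the uniform margin $-Q\ge\delta>0$.

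Next I would polynomialize. By Stone--Weierstrass, replace the continuous $\mu$ by a polynomial $\hat\mu$ with $\sup_{(a,b)\in\pc}\lVert\hat\mu-\mu\rVert$ arbitrarily small; then $\hat d_\psi:=\hat\mu\star b$ and $\hat d_\zeta:=\hat\mu\star[1;a]$ are polynomials uniformly close to $\mu\star b$ and $\mu\star[1;a]$. The crucial structural point is that $\psi^+-\psi^-$ is now \emph{pinned} to the fixed polynomial $\hat d_\psi$, leaving only the sum $s_\psi:=\psi^++\psi^-$ free, subject to $s_\psi\ge\lvert\hat d_\psi\rvert$ componentwise. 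I would take $s_\psi$ to be a polynomial over-approximation of the continuous function $\lvert\hat d_\psi\rvert$ on $\pc$ (approximate within $\epsilon$ by Stone--Weierstrass, then add the buffer $\epsilon$) and set $\hat\psi^\pm:=\tfrac12(s_\psi\pm\hat d_\psi)$; these are polynomials, are nonnegative, and satisfy $\hat\psi^+-\hat\psi^-=\hat\mu\star b$ exactly. The identical construction on $\hat d_\zeta$ gives polynomial $\hat\zeta^\pm\ge 0$ with $\hat\zeta^+-\hat\zeta^-=\hat\mu\star[1;a]$.

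The main obstacle is preserving the strict certificate $-Q>0$ under this replacement, since trading the tight but non-polynomial sums $\lvert\hat d_\psi\rvert,\lvert\hat d_\zeta\rvert$ for slightly larger polynomials \emph{increases} $Q$ through $\epsilon_u\1^T(\psi^++\psi^-)$ and $\epsilon_y\1^T(\zeta^++\zeta^-)$, while perturbing $\mu$ shifts $h^T\mu$. The remedy is the uniform margin $\delta$ secured above: each of these perturbations is uniformly small on the compact set $\pc$ and can be forced below $\delta/3$ by taking the Stone--Weierstrass tolerances fine enough, so the polynomial tuple $(\hat\psi^\pm,\hat\zeta^\pm,\hat\mu)$ still yields $-\hat Q>0$ and is feasible for Problem~\ref{prob:altern_feas}. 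This mirrors Theorem~\ref{thm:m_poly}, and the resulting polynomial multipliers then admit finite-degree Putinar certificates for use in the \ac{SOS} hierarchy.
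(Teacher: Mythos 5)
The paper itself gives no self-contained proof of Proposition~\ref{prop:cont}: it defers entirely to Theorems 4.4 (continuity) and 4.5 (polynomial approximability) of \citep{miller2022eiv}, and your two-stage template---continuous selection, then Stone--Weierstrass approximation that preserves the coupling equalities and sign constraints exactly---is the same strategy that citation follows. Your stage-two device of pinning $\hat\psi^+-\hat\psi^-$ to $\hat\mu\star b$, over-approximating $\abs{\hat\mu\star b}$ by a polynomial $s_\psi$, and splitting $\hat\psi^\pm=\tfrac12(s_\psi\pm\hat\mu\star b)$ is a correct and clean way to keep feasibility exact while perturbing $Q$ only within the margin $\delta$. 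Two genuine gaps remain, however.

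First, the continuity stage is not justified as written. Theorem~2.4 of \citep{mangasarian1987lipschitz} concerns solution sets of linear inequalities under perturbations of the \emph{right-hand side} with a fixed coefficient matrix; that is exactly the situation in Theorem~\ref{thm:m_cont_selection}, where $(a,b)$ enters only through the terms $a^{cl}_i(a,b)$. In your map $\mathcal{N}$, the parameter enters the coefficient matrix itself, through $\mu\star b$, $\mu\star[1;a]$, and $h(a,b)^T\mu$ inside $Q$. You flag this ("the perturbation is in the data, not merely the right-hand side") and then invoke the theorem anyway; parameter-dependent constraint coefficients, especially in equality constraints, are precisely the setting where lower semicontinuity of the solution map can fail, so the citation does not carry the step. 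A repair is available: eliminate $(\psi^\pm,\zeta^\pm)$ in favor of $(\mu,s_\psi,s_\zeta)$ from the outset (as you do later anyway), exhibit the uniform Slater point $(\mu,s_\psi,s_\zeta)=(0,\epsilon'\1,\epsilon'\1)$, which is strictly feasible for small $\epsilon'$ because $\min_{\pc} q>0$ by compactness of $\pc$ and \eqref{eq:q_forall}, and then invoke a lower-semicontinuity result for parametric convex inequality systems with jointly continuous data and Slater points (Hogan-type), not Mangasarian.

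Second, and more fundamentally, the statement you actually prove is available in one line: taking $\psi^\pm\equiv\zeta^\pm\equiv 0$ and $\mu\equiv 0$, the equalities read $0=0$ and $-Q=q$, which is positive on $\pc$ exactly by the hypothesis \eqref{eq:q_forall}; these zero multipliers are polynomial. So over $\pc$ the elaborate construction buys nothing. The content that Proposition~\ref{prop:poly} actually needs is stronger: the polynomial multipliers must make $-Q$ positive, and $\psi^\pm,\zeta^\pm$ nonnegative, over the Archimedean set $\Pi\supseteq\pc$ of assumption A2, so that the Putinar memberships in \eqref{eq:altern_sos_psi}--\eqref{eq:altern_sos_Q} can hold at some finite degree. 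On $\Pi\setminus\pc$ the polynomial $q$ may be negative, nonzero multipliers are genuinely required (there they act as Farkas certificates of infeasibility of the noise LP at plants inconsistent with the data), and the delicate point is continuity of such a selection across the boundary of $\pc$. Your argument never leaves $\pc$---the nonemptiness claim, the margin $\delta$, and every approximation bound are taken over $\pc$---so it does not reach the version of the proposition that the downstream convergence result uses; that is precisely the difficulty the cited Theorems 4.4--4.5 of \citep{miller2022eiv} are invoked to settle.
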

\begin{proof}
This proof is omitted for brevity, as it  follows using arguments from the proof of Theorems 4.4 (continuity) and 4.5 (polynomial approximability) of \citep{miller2022eiv}.
\end{proof}

Forming a Psatz with Problem \ref{prob:altern_feas} requires an  additional assumption:
\begin{itemize}
    \item[A2] An Archimedean set $\Pi \supseteq \pc$ is known in advance. 
\end{itemize}

Eq. \eqref{eq:altern_sos} is a Psatz that can certify \eqref{eq:q_forall} at degree $d$. 
\begin{subequations}
\label{eq:altern_sos}
\begin{align}
     &\psi^\pm(a,b) \in (\Sigma[\Pi]_{\leq 2d})^{T+n_b-1} & &   \label{eq:altern_sos_psi} \\
     &  \zeta^\pm(a,b) \in (\Sigma[\Pi]_{\leq 2d})^{T+n_a} & &    \label{eq:altern_sos_lambda} \\
    & \mu(a,b) \in (\R[a,b]_{\leq 2d-1})^{T} & &  \label{eq:altern_sos_mu}\\
    & -Q(a,b; \ \psi^\pm, \zeta^\pm; \mu) \in \Sigma[\Pi]_{\leq 2 d} \textrm{ (from \eqref{eq:Q_const})}\label{eq:altern_sos_Q}\\
    & \psi^+ - \psi^- = 
    \mu \star b, \quad \zeta^+ - \zeta^- = 
    \mu \star [1;a].
    \label{eq:altern_eq_psi}    
\end{align}
\end{subequations}

\begin{prop}
\label{prop:poly}
Eq. \eqref{eq:altern_sos} will converge to a positivity certificate for \eqref{eq:q_forall} as $d\rightarrow \infty$ under A1 and A2.
\end{prop}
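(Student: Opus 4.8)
The plan is to mirror the convergence argument of Theorem \ref{thm:m_poly}: produce a feasible point of the dual feasibility program (Problem \ref{prob:altern_feas}) with \emph{strict} positivity margins, then certify each positivity condition by a finite-degree Putinar representation over the Archimedean envelope $\Pi$ supplied by A2, and finally invoke the nesting of the \ac{WSOS} cones to obtain a monotone, convergent hierarchy in $d$.

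First I would assume \eqref{eq:q_forall} holds; by the strong alternative to \eqref{eq:q_exists} this makes Problem \ref{prob:altern_feas} feasible, and by Proposition \ref{prop:cont} the certifying multipliers $(\psi^\pm, \zeta^\pm, \mu)$ may be taken polynomial in $(a,b)$. Writing $\psi^\pm_0, \zeta^\pm_0, \mu_0$ for such a choice, compactness of $\pc$ (A1) yields a uniform margin $\delta_0 := \min_{(a,b)\in\pc}(-Q(a,b)) > 0$ for the strict inequality in \eqref{eq:altern_feas}. Next I would regularize by the equality-preserving shift $\psi^\pm \mapsto \psi^\pm_0 + s$ and $\zeta^\pm \mapsto \zeta^\pm_0 + s$ for a small scalar $s>0$: the shift cancels in the differences $\psi^+-\psi^-$ and $\zeta^+-\zeta^-$, so the cross-correlation equalities \eqref{eq:altern_eq_psi} are preserved exactly, while every multiplier becomes strictly positive on $\pc$. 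This shift raises $Q$ through its $\epsilon_u\1^T(\psi^++\psi^-)$ and $\epsilon_y\1^T(\zeta^++\zeta^-)$ terms in \eqref{eq:Q_const} by at most $2s[\epsilon_u(T+n_b-1)+\epsilon_y(T+n_a)]$; choosing $s$ small enough that this quantity is below $\delta_0$ keeps $-Q>0$.

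With strict positivity of $\psi^\pm$, $\zeta^\pm$ and $-Q$ in hand, I would apply Putinar's Positivstellensatz on the Archimedean set $\Pi$ (A2) to each of these polynomials, obtaining degree-$2d$ certificates $\psi^\pm\in\Sigma[\Pi]_{\leq 2d}$, $\zeta^\pm\in\Sigma[\Pi]_{\leq 2d}$ and $-Q\in\Sigma[\Pi]_{\leq 2d}$ for some finite $d$, while the sign-free multiplier $\mu$ enters \eqref{eq:altern_sos_mu} directly. This exhibits a feasible point of \eqref{eq:altern_sos} at that degree, and the inclusions $\Sigma[\Pi]_{\leq 2d}\subset\Sigma[\Pi]_{\leq 2(d+1)}$ make feasibility persist for all larger degrees, giving the claimed convergence.

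The main obstacle is that the alternatives feasibility of Problem \ref{prob:altern_feas} --- nonnegativity of the multipliers and $-Q>0$ --- is only guaranteed on the projection $\pc$, whereas membership in $\Sigma[\Pi]$ forces nonnegativity on the strictly larger envelope $\Pi$. Since $\pc$ is a projection with no explicit \ac{BSA} description, the margin $\delta_0$ above is a bound over $\pc$ rather than over $\Pi$, and the shift $s$ cannot in general be chosen to make $-Q$ positive on all of $\Pi$ at once. The real work therefore lies in using the multiplier freedom (the simultaneous slacks added to $\psi^+,\psi^-$ and to $\zeta^+,\zeta^-$, together with the sign-free $\mu$) to propagate positivity from $\pc$ to $\Pi$, which is precisely where the continuity and polynomial-approximation arguments of \citep{miller2022eiv} cited in Proposition \ref{prop:cont} are needed; I expect this step to be the crux of a complete proof.
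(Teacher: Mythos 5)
Your proposal follows essentially the same route as the paper: the paper's own proof is your first paragraph compressed into two sentences --- Proposition \ref{prop:cont} supplies a polynomial certificate $(\psi^\pm,\zeta^\pm,\mu)$, and since polynomials have finite degree, the hierarchy \eqref{eq:altern_sos} attains it at some finite $d$, with the nesting of the cones giving persistence for larger $d$. The obstacle you flag --- that feasibility of Problem \ref{prob:altern_feas} guarantees positivity only on $\pc$, whereas \eqref{eq:altern_sos} demands membership in $\Sigma[\Pi]$ over the larger set $\Pi$ --- is not addressed in the paper's proof either; it is implicitly delegated to the omitted proof of Proposition \ref{prop:cont} and the cited Theorems 4.4 and 4.5 of \citep{miller2022eiv}, so your account is, if anything, more explicit than the paper about where the remaining work lies.
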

\begin{proof}
Proposition \ref{prop:cont} ensures that there exists at least one polynomial certificate $(\psi^\pm, \ \zeta^\pm, \ \mu)$. Given that polynomials have finite degree, letting $d \rightarrow \infty$ will ensure that a polynomial will be reached at some finite $d$.
\end{proof}


\subsection{Alternatives for Superstabilization}

The Alternatives Psatz in Eq. \eqref{eq:altern_sos} may be employed for superstabilizing \ac{EIV} control. 
The Alternatives program to solve Problem \ref{prob:full_function} is described in Eq. \eqref{eq:super_altern_wsos}. Eq. \eqref{eq:super_altern_wsos} is structurally identical to \eqref{eq:super_full_wsos}, with the difference that $m$ is now a function of $(a,b)$ and the positivity constraints in \eqref{eq:super_alt_wsos_put_gamma}-\eqref{eq:super_alt_wsos_put_m} are imposed using Eq. \eqref{eq:altern_sos} $(\Sigma[\pc]^\alt_{\leq2d})$.
\begin{subequations}
\label{eq:super_altern_wsos}
\begin{align}
\gamma_d^*&= \min_{\gamma \in \R} \gamma \\
    &\tilde{a} \in \R^{n_a}, \ \tilde{b} \in \R^{n_b} \\
    & m_i \in (\R[a, b])^{n_{cl}}_{\leq 2d}  & & \forall i \in 1..n_{cl}\\
    & \gamma - \textstyle \sum_{i=1}^{n_{cl}} m_i(a, b) \in \Sigma[\bpc]_{\leq 2d} \label{eq:super_alt_wsos_put_gamma}\\
    &  m_{i}(a,b) - a^{cl}_i(a,b)\in \Sigma[\bpc]_{\leq 2d} & & \forall i=1..n_{cl} \label{eq:super_alt_wsos_put_p}\\
    &  m_{i}(a,b) + a^{cl}_i(a,b)\in \Sigma[\bpc]_{\leq 2d}& &\forall i=1..n_{cl}.\label{eq:super_alt_wsos_put_m}
\end{align}
\end{subequations}
\section{Computational Complexity}
\label{sec:complexity}

This section will tabulate the computational complexity of imposing that a polynomial $q(a,b) \in \R[a,b]_{\leq 2d}$ is nonnegative over $\pc$ using the Putinar (Full) Psatz in \eqref{eq:putinar} and the Alternatives Psatz in \eqref{eq:altern_sos}.
We will use the quantity $N = n_a + n_b$ in this analysis.
The Full program in  \eqref{eq:super_full_wsos} and the Alternatives program in Eq. \eqref{eq:super_altern_wsos} each have $2 n_{cl} + 1$ instances of their respective Psatz certificates. 
Table \ref{tab:ss} compiles the sizes of the optimization variables in the Full and Alternatives programs. The notation $\R$ and $\psd_{+}$ in the table refers to the length of a real vector and the dimension of \iac{PSD} matrix, respectively. This analysis treats the set $\Pi = \R^{n_a + n_b}$ in the Alternatives program to simplify tabulation. If the \ac{BSA} set $\Pi$ has $N_\Pi$ polynomial-defined constraints, then the Alternatives program has 1 set of variables corresponding to entries in the Alternatives column in \ref{tab:ss} and $N_\Pi$ sets of variable with smaller sizes. 
\begin{table}[h]
    \centering
        \caption{Size of Superstabilizing Psatz.}
    \label{tab:ss}
\begin{tabular}{l|l|l|l}
              & $\#$ polynomials & Full                           & Alternatives\\ \cline{1-4} 
$\sigma_0$    & 1           & $\psd_+{\binom{2(N+T)-1 + d}{d}}$ &  $\psd_+{\binom{N + d}{d}}$   \\
$\psi^\pm$    & $2(n_b+T-1)$  & $\psd_+{\binom{2(N+T)-1 + d-1}{d-1}}$ & $\psd_+{\binom{N + d}{d}}$ \\ 
$\zeta^\pm$ & $2(n_a+T)$  & $\psd_+{\binom{2(N+T)-1 + d-1}{d-1}}$ & $\psd_+{\binom{N + d}{d}}$ \\ 
$\mu$         & $T$       & $\R{\binom{2(N+T)-1 + 2d-2}{2d-2}}$ & $\R{\binom{N + 2d-1}{2d-1}}$ 
    \end{tabular}
\end{table}

\begin{rem}
The Alternatives program is more efficient than the Full program for each $d$ given that $N < 2(N+T)$.
\end{rem}
\begin{rem}
Section \ref{sec:altern} eliminated $(\du, \dy)$ and presented a Psatz \eqref{eq:altern_sos} in terms of the $(n_a + n_b)$ variables $(a, b)$. Given that $\bpc$ in \eqref{eq:bpc} is bilinear in terms of the groups $[(a, b), (\du, \dy)]$ and each $a^{cl}$ in \eqref{eq:closed_loop_coeff} is linear in $(a, b)$, an Alternatives program in terms of $(\du, \dy)$ could have been created by eliminating $(a, b)$. This approach would be more complex than Eq. \eqref{eq:super_altern_wsos}, because $(\du, \dy)$ has a total of $2T + n_a + n_b - 1 > n_a+n_b$ variables.
\end{rem}

\section{Numerical Examples}

\label{sec:examples}

MATLAB (2020b) code to reproduce the below experiments is located at \url{https://github.com/jarmill/eiv_arx}. These routines require Mosek \citep{mosek92} and YALMIP  \citep{lofberg2004yalmip}.

We tested the effectiveness of the proposed method using a discrete-time model of
\begin{equation}\label{eq:sys1}
    G(\lambda) = \frac{\lambda^2}{1+0.5\lambda-1.21\lambda^2-0.605\lambda^3}.
\end{equation}
This system is open-loop unstable with unstable poles $z=\frac{1}{\lambda} = \pm1.1$. For comparison purpose, we first solve the model-based superstabilization problem. This is addressed by minimizing $\gamma$ with $||a^{cl}||_1 \leq \gamma$ and known $a,b$ from \eqref{eq:sys1} to search for control coefficients $\tilde{a},\tilde{b}$. 
A superstabilizing controller will occur with $\gamma < 1$, and a smaller $\gamma$ results in a faster controller.
A special case $\gamma = 0$ corresponds to the deadbeat control, i.e. all closed-loop poles are located at the origin. For model \eqref{eq:sys1}, two types of controllers can be found. The low-order controller is obtained with $n_{\tilde{a}} = 3, n_{\tilde{b}} = 2$ and $\gamma = 0.4417$:
\begin{equation}\label{eq:ctrl1}
C(\lambda)=\frac{1.829\lambda^2}{1-0.5\lambda+1.46\lambda^2-0.73\lambda^3}.
\end{equation}
The deadbeat controller is obtained with $n_{\tilde{a}} = 4, n_{\tilde{b}} = 3$ and $\gamma = 0$:
\begin{equation}\label{eq:ctrl2}
C(\lambda)=\frac{0.73\lambda-1.464\lambda^2-0.8833\lambda^3}{1-0.5\lambda+1.46\lambda^2}
\end{equation}
\begin{rem}
Note that we design using $\lambda = \frac{1}{z}$, the controller is improper in $\lambda$ but is physically realizable in $z$. 
\end{rem}
For the data-driven setup, we used the deadbeat controller as the benchmark, i.e. $\gamma = 0$. The system is excited using uniformly distributed input $||u_t||_\infty\leq 1$, initial output $\{y_t\}_{t=-na+1}^{-1}$ and noise $||\Delta y||_\infty \leq \epsilon_y, ||\Delta u||_\infty \leq \epsilon_u$ with $T=10$ samples.
We start from the noise-free data, i.e. $\epsilon_y = \epsilon_u = 0$. Directly applying Full  Eq. \eqref{eq:super_full_wsos} (with $d = 2$) leads to an intractable problem. Using the Alternatives Eq. \eqref{eq:super_altern_wsos} (with $d = 1$) leads to the same $\ell_1$-optimal deadbeat controller as in \eqref{eq:ctrl2}. 
This indicates that there is no gap between the original problem and its alternative form. 
For specific complexity, see Table \ref{tab:ss_eg}. 
\begin{table}[h]
    \centering
        \caption{Size of Superstabilizing Psatz (Model \eqref{eq:sys1} with $n_a = 3, n_b = 2, T = 10$). }
    \label{tab:ss_eg}
\begin{tabular}{l|l|l|l}
              & $\#$ polynomials & Full                           & Altern.\\ \cline{1-4} 
$\sigma_0$    & 1           & 465 &  6   \\
$\psi^\pm$    & 22 & 30 & 6 \\ 
$\zeta^\pm$ & 26  & 30 & 6 \\ 
$\mu$         & 10       & 465 & 6
    \end{tabular}
\end{table}

Next, we show the effect of changing $T$ (Table \ref{tab:T}) and $\epsilon$ (Table \ref{tab:eps}) on the performance index $\gamma$. 
\begin{table}[h]
    \centering
    \caption{$\gamma$ v.s. $T$ with $\epsilon_y=\epsilon_u = 0.02$. }
    \label{tab:T}
    \begin{adjustbox}{width=0.35\textwidth}
    \begin{tabular}{|l|l|l|l|l|}
    \hline
         $T$ & 20 & 40 & 60 & 80  \\ \hline
         $\gamma$ & 0.4365 & 0.3132 & 0.2732 & 0.2515 \\ \hline
    \end{tabular}
    \end{adjustbox}
\end{table}
\begin{table}[h]
    \centering
    \caption{$\gamma$ v.s. $\epsilon=\epsilon_y = \epsilon_u$ with $T = 80$. }
    \label{tab:eps}
    \begin{adjustbox}{width=0.35\textwidth}
    \begin{tabular}{|l|l|l|l|l|}
    \hline
         $\epsilon$ & 0.02 & 0.04 & 0.06 & 0.08  \\ \hline
         $\gamma$ & 0.2515 & 0.4924 & 0.7312 & 0.9755 \\ \hline
    \end{tabular}
    \end{adjustbox}
\end{table}

Several conclusions can be drawn from the tables. 
\begin{enumerate}
    \item[a)] For the noisy trajectory, $\gamma \neq 0,$  the learned controller is no longer deadbeat. 
    \item[b)] As we increase $T$, the consistency set shrinks, hence we get a faster controller. 
    \item[c)] Larger noise leads to a larger consistency set, which makes it harder to find a single robust controller. This issue may be alleviated by collecting more samples. 
\end{enumerate}


The performance $\gamma$ can also be improved by selecting a higher order controller. This is illustrated by Table \ref{tab:high_order} in which $n_b = n_a -1$.
\begin{table}[h]
    \centering
    \caption{$\gamma$ vs. $n_a$ with $\epsilon_y = \epsilon_u = 0.01, T = 10$. }
    \label{tab:high_order}
    \begin{adjustbox}{width=0.35\textwidth}
    \begin{tabular}{|l|l|l|l|l|}
    \hline
         $n_a$ & 4 & 6 & 8 & 10  \\ \hline
         $\gamma$ & 0.6926 & 0.5436 & 0.5167 & 0.5166 \\ \hline
    \end{tabular}
    \end{adjustbox}
\end{table}
\begin{rem}
This behaviour is not seen in the model-based control since any controller order with $n_a \geq 4, n_b \geq 3$ leads to the deadbeat control. However, for data-driven control, the extra order helps to reduce $\gamma$. 
\end{rem}
\section{Extensions}
\label{sec:extensions}




\subsection{Process Noise}

$L_\infty$-bounded process noise $w_t$ with $\norm{w}_\infty \leq \epsilon_w$\ can be added into the \ac{ARX} model \eqref{eq:arx_ground_truth} with
\begin{align}
    y_{t} &= -\textstyle \sum_{i=1}^{n_a} a_i y_{t-i} + \sum_{i=1}^{n_b} b_i u_{t-i} + w_t. \label{eq:arx_ground_truth_w}
\end{align}

With process noise, the equality constraint in \eqref{eq:arx_bilinear_h} is relaxed to a pair of inequalities for each $t=1..T-1$ as
\begin{align}
    -\epsilon_w \leq h_t + \left(-\textstyle \sum_{i=1}^{n_a} a_i \dy_{t-i} + \sum_{i=1}^{n_b} b_i \du_{t-i}\right) - \dy_t  \leq \epsilon_w. \label{eq:arx_bilinear_h_w}
\end{align}

The full program in \eqref{eq:super_full_wsos} may be executed with process noise by posing Psatz constraints over an enlarged consistency set of
\begin{equation}
\label{eq:bpc_w}
    \bpc_w: \left\{\begin{array}{c|c}
    a \in \R^{n_a}, \ b \in \R^{n_b} & \norm{\du}_\infty \leq \epsilon_u  \\
    \du \in \R^{T+n_b-1}  & \norm{\dy}_\infty \leq \epsilon_y \\
    \dy \in \R^{T+n_a} & \textrm{Ineq.  \eqref{eq:arx_bilinear_h_w} holds} \ \forall t = 1..T \\
    \end{array}\right\}
\end{equation}

The Alternatives Psatz in \eqref{eq:altern_sos} may be derived for the set $\bpc_w$ by defining functions $\mu^\pm_t(a, b)$ that are nonnegative over $\pi^{a,b}\bpc_w$ replacing \eqref{eq:Q_const} with
\begin{align}
    Q^w(a,b) &= -q(a,b) + h^T (\mu^+ - \mu^-) + \epsilon_w \1^T(\mu^+ + \mu^-) \nonumber \\
    & + \epsilon_u\1^T(\psi^+ + \psi^-) + \epsilon_y\1^T(\zeta^+ + \zeta^-),
\end{align} 
and every instance of $\mu$ in \eqref{eq:altern_sos_mu}-\eqref{eq:altern_sos_mu} with $\mu^+ - \mu^-$.

\subsection{Switched ARX Models}

The switched setting involves $N_s$ \ac{ARX} models (subsystems) with per-subsystem parameters $(a^k, b^k)$ and lengths $(n_a^k, n_b^k)$ for $k=1..N_s$. A switching sequence $S$  takes on values $S(t) \in 1..N_s$ for each $t \in 1..T$. The ground-truth switched \ac{ARX} model given $S$ is
\begin{align}
    y_{t} &= -\textstyle \sum_{i=1}^{n_a^{S(t)}} a_i^{S(t)} y_{t-i} + \sum_{i=1}^{n_b^{S(t)}} b_i^{S(t)} u_{t-i}.  \label{eq:arx_ground_truth_switched}
\end{align}

Performing a substitution as in \eqref{eq:dydu_corruption} allows for the creation of a consistency set $\bpc^S(\{a_k, b_k\}_{k=1}^{N_s}, \du, \dy)$. The projection $\pi^{a,b} \bpc^S(\{a_k, b_k\}_{k=1}^{N_s})$ has $n_{\textrm{sys}}=\sum_{k=1}^{N_s} n_a^k + n_b^k$ variables. The superstabilization problem for switched \ac{ARX} with a known switching sequence involves finding subsystem-controllers $(\tilde{a}^k, \tilde{b}^k)$ such that each  subsystem-closed-loop transfer functions in \eqref{eq:closed_loop} is superstable. An Alternatives Psatz (modification of \eqref{eq:altern_sos}) would involve Gram matrices of maximal size $\binom{n_{\textrm{sys}}+d}{d}$.
\section{Conclusion}
\label{sec:conclusion}
This paper proposed an algorithm to perform superstabilizing control of \ac{EIV}-corrupted \ac{ARX} systems. The controller $(\tilde{a}, \tilde{b})$ is recovered by solving \iac{POP} using the moment-\ac{SOS} hierarchy of \acp{SDP}. Utilizing the Theorem of Alternatives results in a significantly more tractable program as compared to the Full case.

Future work involves forming superstabilizing controllers for Multi-Input Multi-Output systems using Matrix Fraction Descriptions \citep{chen2005design} and for Linear-Parameter Varying systems. 




\bibliography{references}


\end{document}